\documentclass[11pt]{amsart}

\usepackage{latexsym}
\usepackage{ulem}
\usepackage{amsthm}
\usepackage{amsmath}
\usepackage{amssymb}
\usepackage{color}
\usepackage{url}
\usepackage[shortlabels]{enumitem}
\newtheorem{theorem}{Theorem}

\newtheorem{lem}[theorem]{Lemma}

\newtheorem{rmk}[theorem]{Remark}

\newtheorem*{theorem*}{Theorem}

\def \n{\noindent }
\def \bs{\bigskip}
\def \P{{\mathcal P}}
\def \ra{\right\rangle}
\def \la{\left\langle}

\def \ga{\gamma}
\def \al{\alpha}
\def \be{\beta}
\def \de{\delta}

\def \bea{\begin{eqnarray*}}
\def \eea{\end{eqnarray*}}

\def \F{{\mathbb F}}
\def \dim{\mbox{dim}}
\def \avsp{\mbox{\bf avsp}}
\def \avsps{\mbox{\bf avsps}}
\def \o{\mbox{\bf 0} }
\begin{document} 

 
\title{A note on Tight Irreducible Affine Spreads}
\author{Fusun Akman}
\author{Papa Sissokho}
\address[Fusun Akman and Papa Sissokho]{Mathematics Department, Illinois State University, Normal, Illinois 61790, USA}
\email{akmanf@ilstu.edu}
\email{psissok@ilstu.edu}

\begin{abstract}
 Let $\F_q^n$ denote the vector space of  dimension $n$ over $\F_q$ and AG$(n,q)$ denote the corresponding affine space. An {\it affine vector space partition} of AG$(n,q)$ is a collection $\P$ of affine subspaces that partition the points of AG$(n,q)$. If all subspaces in $\P$ have the same dimension $d$, then $\P$ is called an {\it affine $d$-spread}. We say that an affine partition $\P$ is {\it completely tight} if for any pair $C,C'\in\P$ with $C=v+S$, $C'=v'+S'$, where $v,v'\in{\rm AG}(n,q)$ and  $S\neq S'$ are linear subspaces of $\F_q^n$, we have $ S\cap S'=\{\o\}$. 

 An affine partition $\P$ is said to be {\it irreducible}  if there is no subset $\P'\subset \P$ such that $1<|\P'|<|\P|$ and the union of all  subspaces in $\P'$ is a  subspace of AG$(n,q)$. For  all $d \geq 1$ and $n>2d$, we construct a completely tight irreducible affine $d$-spread of AG$(n,q)$. This also settles a recent conjecture of Bamberg et al.~\cite{BFIK} on the existence of tight irreducible affine $d$-spreads.

\bs\n \textbf{Keywords.} affine vector space partition, affine spread, subcube partition

\bs\n \textbf{AMS Mathematics Subject Classification.} 51E23, 05B40 
\end{abstract}
\ 
\maketitle

\section{Introduction}
 
Let $\F_q^n$ denote the vector space of  dimension $n$ over  the finite field $\F_q$ and AG$(n,q)$ denote the corresponding affine space whose subspaces are the linear subspaces of $\F_q^n$ and their cosets. An {\it affine vector space partition} (\avsp) of AG$(n,q)$ is a collection $\P$ of subspaces of AG$(n,q)$ such that each point of AG$(n,q)$ (or vector of  $\F_q^n$) is in exactly one subspace of $\P$.  The dimension of $C\in\P$ is the algebraic dimension of the linear subspace $S$ such that $C=a+S$ for some $a\in\F_q^n$. In this case, we say  $S$ is the {\it supporting linear subspace} of $C$. In particular, if $\dim\, C=d$, then $|C|=q^d$. 
If all subspaces in a partition $\P$ have equal dimension $d$, then $\P$ is called an {\it affine $d$-spread}. 
 The definition of an affine partition does not compel the supporting subspaces to be distinct, but that will be the case in the main construction of this paper.  Affine vector space partitions (\avsps) are closely related to the well-studied {\it vector space partitions}, which are collections of (projective) subspaces that partition the points of GF$(n,q)$ (see Heden~\cite{He} for a survey).  In this paper, we are interested in \avsps~$\P=\{C_1,\ldots,C_r\}$ that satisfy a subset of the following properties:
\begin{enumerate}
\item[(i)] {\bf Tightness:}
$\P$ is {\it tight} if $\bigcap_{i=1}^r S_i=\{\o\}$, where $\o$ denotes the the zero vector of $\F_q^n$ and $S_i$'s are the supporting subspaces of the $C_i$'s. The notion of tightness (see Bamberg et al.~\cite{BFIK}) was introduced under the name {\it primitivity} by Agievich~\cite{Ag} and later called {\it A-primitive} by Tarannikov~\cite{Ta}.
\item[(ii)] {\bf Complete Tightness:}
$\P$ is {\it completely tight} if $S_i\cap S_j=\{\o\}$ for any $C_i,C_j\in\P$ with $S_i\not=S_j$.  Our notion of complete tightness is motivated by the study of {\it Steiner coset partitions~\cite{AS3}, where the supporting subgroups of the cosets are required to be distinct}.
\item[(iii)] {\bf Irreducibility}:  $\P$ is {\it irreducible}  if there is no subset $\P'\subset \P$ with $1<|\P'|<|\P|$ such that the union of  all subspaces in $\P'$ is a  subspace of AG$(n,q)$. 
\end{enumerate}
Agievich~\cite{Ag} used tight \avsps~to construct structures called {\it bent rectangles}, which are used to build and classify standard {\it bent functions}. Tarannikov~\cite{Ta} derived necessary conditions for the existence of tight  affine $d$-spreads. More recently,  Bamberg, Filmus, Ihringer, and Kurz~\cite{BFIK} started a systematic study of tight and irreducible \avsps~and pioneered several directions of research on these objects. For $q=2$, \avsps~are related to {\it subcube partitions}, which are considered by Filmus et al.~\cite{FHKIRSV}. For recent results on affine spreads of quadrics, see Gupta and Pavese~\cite{GP}.

In this paper, we prove the following theorem (Theorem~\ref{thm:1}), which also verifies a conjecture~\cite[Conjecture~1]{BFIK} of Bamberg et al.
\begin{theorem}\label{thm:1}
If $d \geq 1$ and $n>2d$, then there exists a completely tight irreducible affine $d$-spread of AG$(n,q)$.  
\end{theorem}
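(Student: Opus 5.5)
The plan is an explicit construction followed by a direct check of the three properties, with irreducibility the main difficulty.

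Write $\F_q^n=X\oplus Y$ with $\dim Y=d$ and $\dim X=n-d\ge d+1$. All the blocks will be cosets of graphs $G_L=\{(Ly,y): y\in Y\}$ of linear maps $L:Y\to X$. Two facts make these convenient:
\begin{itemize}
\item each $G_L$ is $d$-dimensional;
\item $G_L\cap G_M=\{\o\}$ exactly when $L-M$ is injective.
\end{itemize}
So complete tightness will follow once the maps used have pairwise injective differences. To get such a family, identify $X$ with $\F_{q^{n-d}}$ and fix an injective $\F_q$-linear map $\iota:Y\to X$. The maps $L_a(y)=a\,\iota(y)$, for $a\in\F_{q^{n-d}}$, then satisfy that $L_a-L_b$ is injective whenever $a\neq b$.

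The blocks are $(x,\o)+G_{L_{\varphi(x)}}$ for $x\in X$, where $\varphi:X\to\F_{q^{n-d}}$ is a labelling still to be chosen. These $q^{n-d}$ blocks have $q^d$ points each, so they partition $\F_q^n$ iff, for every $y\in Y$, the map
\[
T_y:\ x\mapsto x+\varphi(x)\,\iota(y)
\]
is a bijection of $X$. A sufficient mechanism is the following.
\begin{itemize}
\item Choose a proper subspace $K\subset X$ and a quotient map $\pi:X\to X/K$.
\item Let $\varphi$ factor through $\pi$, i.e.\ $\varphi$ is constant on cosets of $K$.
\item Require $\varphi(x)\,\iota(Y)\subseteq K$ for all $x$.
\end{itemize}
Then $T_y$ preserves each coset of $K$ and acts on it as a translation, hence bijectively. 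The simplest choice is $\varphi(x)\in\F_q a_0$ for a fixed $a_0\neq 0$, with $K=a_0\iota(Y)$. This has $\dim K=d<n-d$, so $X/K\neq 0$, and distinct scalars $c\neq c'$ give injective $(c-c')L_{a_0}$. To allow more than $q$ distinct supports, which matters especially for $q=2$, I would generalise to a flag of subspaces $K_1\subset K_2\subset\cdots$ and a $\varphi$ that is ``triangular''. The aim is that $T_y$ is unipotent-like, so bijectivity is proved coset by coset down the flag.

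Irreducibility is the main step and the hardest one. Suppose $\P'\subset\P$ with $1<|\P'|<|\P|$ has union an affine subspace $A$. Then $|A|=q^d|\P'|$, so $\dim A=d+k$ with $1\le k<n-d$. The first attempt is to exploit the fact that $A$ contains two blocks with distinct supports $G_L$ and $G_M$, and hence contains $G_L+G_M$ as part of its direction space. When $L-M$ is injective, $G_L+G_M$ has dimension $2d$ and projects onto $Y$. Consequently $A$ meets every fibre $\{y=\text{const}\}$ in an affine subspace $A_y$ of the same dimension $k$. On the fibre over $y$, the blocks of $\P'$ become the points $T_y(x)$ for $x$ in the index set $I=\{x:(x,\o)+G_{L_{\varphi(x)}}\in\P'\}$. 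So $T_y(I)$ must be an affine subspace of $X$ for every $y$.

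The plan is to choose $\varphi$ nonlinear enough that this fails for every proper $I$ with $|I|>1$. For example, one could take $\varphi$ to be a non-affine function on $X/K$, or on the flag quotients, such as one with a nonzero value only on a set that is not an affine subspace. Then I would compare $I$ and $T_y(I)$ for two well-chosen values of $y$ and derive a contradiction. Two further cases must be ruled out separately:
\begin{itemize}
\item $\P'$ consists of blocks with a single support, which makes $A$ a union of cosets of one $G_L$;
\item $I$ is a union of $K$-cosets.
\end{itemize}
I expect both to require choosing $\varphi$ so that every $K$-coset, and every candidate affine $I$, contains points with at least two different labels. The condition $n>2d$ is exactly what leaves room for this, since it guarantees $\dim X/K\ge 1$ together with a nonconstant labelling.
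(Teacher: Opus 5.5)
There is a genuine gap, and it is structural: the mechanism you use to make the $T_y$ bijective forces the partition to be reducible. Suppose $\varphi$ takes a constant value $a$ on a coset $x_0+K$ with $\{\mathbf{0}\}\neq K\subsetneq X$, and let $m=\dim\, K$. Then the $q^m$ blocks indexed by $x_0+K$ have union $(x_0,\mathbf{0})+(K\times\{\mathbf{0}\})+G_{L_a}$, which is an affine subspace. Under your condition $a\,\iota(Y)\subseteq K$ it is just $(x_0+K)\times Y$; for your simplest choice this is a $2d$-dimensional affine subspace made of $q^d$ blocks. Since $1<q^m<q^{n-d}$, this subfamily violates irreducibility. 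If instead $K=\{\mathbf{0}\}$, then $\varphi\equiv 0$ and $\P$ is a parallel class, which is reducible because $n-d\geq 2$.

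So requiring $\varphi$ to factor through $\pi$ directly contradicts your later hope that every $K$-coset carries two different labels. The case ``$I$ is a union of $K$-cosets'' is not an exceptional case to be ruled out; it always occurs. The flag version has the same defect. If $T_y$ acts as a translation on each $K_1$-coset for every $y$, then $(\varphi(x)-\varphi(x'))\iota(y)=0$ whenever $x-x'\in K_1$, so $\varphi$ is again constant on $K_1$-cosets. More generally, the computation above shows that in your parametrization irreducibility forces $\varphi$ to be nonconstant on every affine line of $X$. For $q=2$ this means $\varphi$ must be injective, so no labelling pulled back from a proper quotient can work.

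The missing idea is to obtain bijectivity multiplicatively. The paper takes $X=H=\F_{q^{n-d}}$ and $\iota(u_i)=\be^i$ with $\be$ primitive, so $\iota(Y)=W=\la \be,\ldots,\be^d\ra$ with $1\notin W$; this is where $n>2d$ is used. It then uses the identity labelling $\varphi(x)=x$, so the blocks are $C_\ga=\ga+S_\ga$ with $S_\ga=G_{L_\ga}$. Now $T_y(x)=(1+\iota(y))x$ is multiplication by a nonzero field element, so $\P$ is a spread, and all $q^{n-d}$ supports pairwise meet trivially.

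Irreducibility then comes from the field structure rather than from any nonlinearity of $\varphi$. Suppose the blocks $C_\ga$, $\ga\in I$, fill an affine subspace $V'$ with direction $V'_0$. A block meets $H$ only in its base point, so $V'\cap H=I$. Hence $I$ is affine with direction $I_0=V'_0\cap H$, which is nonzero since $|I|\geq 2$. For $\ga,\ga'\in I$, subtracting $\ga'\be+u_1\in S_{\ga'}\subseteq V'_0$ from $\ga\be+u_1\in S_\ga\subseteq V'_0$ gives $(\ga-\ga')\be\in I_0$. Thus $I_0$ is closed under multiplication by $\be$, hence is a nonzero $H$-subspace of $H=\F_q[\be]$. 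So $I_0=H$ and $\P'=\P$. This is the content of the paper's induction showing $\ga\be^k\in V'$ for all $k\geq 1$. Your fibre observation that each $T_y(I)$ must be affine is correct, but it is not needed.
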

A construction of such \avsps~is given in Section~\ref{sec:prf-main}.
\begin{rmk}

\n $(1)$  Bamberg et al.~\cite{BFIK} have shown the existence of tight irreducible affine $d$-spreads of AG$(n,q)$ for $n\in \{4,5,6\}$ and $q=2$ as well as for $d=\lfloor n/2 \rfloor$ and any prime power $q$. 

\n $(2)$  Theorem~\ref{thm:1} is stronger than the conjecture~\cite[Conjecture~1]{BFIK} of Bamberg et al.,  in that it does not require ``complete tightness'' or that ``$d$ divides $n$''.

\n $(3)$ The conjecture \cite[Conjecture~1]{BFIK} of Bamberg et al. uses an equivalent projective formulation. Thus, the dimension $k$ there  corresponds to $d+1$ here. 
\end{rmk}
%
\section{Proof of Theorem~\ref{thm:1}}\label{sec:prf-main}
In this section, we provide a constructive proof of the main theorem (Theorem~\ref{thm:1}), i.e., a construction of completely tight \avsps~of  AG$(n,q)$ that are affine $d$-spreads.  Let $\la X\ra$ denote the linear span of  any $X\subseteq \F_q^n$, $d$ and $n$ be positive integers such that $n>2d$, and $q$ be a prime power.

\bs\n {\bf Construction~1:} 
\begin{enumerate}
\item The finite field $H=\F_{q^{n-d}}$ is an $(n-d)$-dimensional vector space over $\F_q$, and $W$ is a $d$-dimensional subspace of $H$ with basis $\{w_1,\ldots,w_d\}$. 
\item $U$ is a $d$-dimensional vector space over $\F_q$ with basis $\{u_1,\ldots,u_d\}$.
\item Let $V=H\oplus U$, an $n$-dimensional vector space over $\F_q$. For each $\ga\in H$, define 
\[S_\ga:=\la \ga w_1+u_1,\ldots,\ga w_d+u_d\ra,\]
where $\ga w_i$ denotes field multiplication in $H$.
\item Pick some $\al\in (H\setminus W)$, and define 
\[C_{\al,\ga}:=\al\ga+S_\ga=\{\al\ga+x:\; x\in S_\ga\}\]
and
\[\P_{\al}:=\{C_{\al,\ga}:\; \ga\in H\}.\]
\end{enumerate}
\begin{lem}\label{lem:1}
The set $\P_\al$  given in Construction~1 is  a completely tight affine $d$-spread of  AG$(n,q)$.
\end{lem}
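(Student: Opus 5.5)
We have to show three things: each $S_\ga$ has dimension $d$, the cosets $C_{\al,\ga}$ ($\ga\in H$) partition $V$, and distinct supporting subspaces meet only in $\o$. The plan is to write a general element of $S_\ga$ as
\[
x=\ga w+u, \qquad w=\sum_i c_iw_i\in W,\quad u=\sum_i c_iu_i\in U,\quad c_i\in\F_q .
\]
The map $c\mapsto \sum_i c_iu_i$ is injective, and $U$ meets $H$ trivially in $V$. Hence the map $(c_1,\ldots,c_d)\mapsto x$ is injective, so $\dim S_\ga=d$ and $|C_{\al,\ga}|=q^d$.

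For complete tightness we take $\ga\neq\ga'$ and an $x\in S_\ga\cap S_{\ga'}$. Comparing $U$-components forces the same coefficients $c_i$ in both representations. Comparing $H$-components then gives $\ga w=\ga' w$, that is, $(\ga-\ga')w=0$ in the field $H$. So $w=0$, hence all $c_i=0$ and $x=\o$. As a by-product, the $S_\ga$ are pairwise distinct.

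For the partition we use counting. There are $|H|=q^{n-d}$ cosets, each of size $q^d$, for a total of $q^n=|V|$ points. It therefore suffices to show that the cosets are pairwise disjoint. Suppose
\[
\al\ga+\ga w+u=\al\ga'+\ga' w'+u' .
\]
The $U$-components give $u=u'$, so $w=w'$ by the shared coefficients. The $H$-components then give
\[
\al\ga+\ga w=\al\ga'+\ga' w, \quad\text{i.e.}\quad (\ga-\ga')(\al+w)=0 .
\]
Since $\al\notin W$, we have $\al+w\neq 0$, hence $\ga=\ga'$.

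The only step needing care is this disjointness argument, which is exactly where the choice $\al\in H\setminus W$ is used. That $V=H\oplus U$ is direct is given by the construction. With these steps, $\P_\al$ is an affine $d$-spread of AG$(n,q)$ with pairwise trivially intersecting, distinct supports, hence completely tight.
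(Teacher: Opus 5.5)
Your proof is correct and follows essentially the same route as the paper. Comparing $U$-components forces equal coefficients, the absence of zero divisors in the field $H$ gives $S_\gamma\cap S_{\gamma'}=\{\mathbf{0}\}$, the condition $\alpha\notin W$ gives disjointness of the cosets, and the count $q^{n-d}\cdot q^{d}=q^{n}$ finishes the argument (your explicit check that $\dim S_\gamma=d$, which the paper leaves implicit, and your direct contradiction from $\alpha+w\neq 0$ are slightly cleaner).
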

\begin{proof}
It follows from Construction~1 that $S_\ga$ is a linear subspace of $V=H\oplus U\cong \F_q^{n}$ for any $\ga\in H$, and $C_{\al,\ga}:=\al\ga+S_\ga$ is an affine subspace of AG$(n,q)$. It is known from~\cite[Theorem~3.1]{ESSSV} that $S_\ga\cap S_{\ga'}=\{\o\}$  for $\ga,\ga'\in H$ with $\ga\not=\ga'$, but for completeness, we provide the short proof here.  Let
\bea
x\in S_\ga\cap S_{\ga'}
&\Longleftrightarrow& \sum_{i=1}^d a_i(\ga w_i+u_i)=x=\sum_{i=1}^d b_i (\ga' w_i+u_i) \quad \mbox{(for some scalars $a_i,b_i\in \F_q$)}\cr
&\Longleftrightarrow& \sum_{i=1}^d (a_i\ga-b_i\ga')w_i=\sum_{i=1}^d (b_i-a_i)u_i\cr
&\Longrightarrow& a_i-b_i=0 \mbox{ for $1\leq i\leq d$ } 
\quad  (H \cap U=\{\o\})\cr
&\Longrightarrow& (\ga-\ga')\sum_{i=1}^d b_i w_i=\o\cr
&\Longrightarrow& \sum_{i=1}^d b_i w_i=\o \quad (\ga-\ga'\not=\o)\cr
&\Longrightarrow&  a_i=b_i=0 \mbox{ for $1\leq i\leq d$ } \quad \mbox{($\{w_1,\ldots ,w_d\}$ is a basis of  $W$)}.\cr
\eea
Then  $x=\sum_{i=1}^d a_i(\ga w_i+u_i)=\o$, i.e., $S_\ga\cap S_{\ga'}=\{\o\}$. Thus, to prove the lemma, it remains to show that 
$C_{\al,\ga}\cap C_{\al,\ga'}=\emptyset$, because then $\P$ covers all $q^d\cdot q^{n-d}=q^n$ points of AG$(n,q)$. For a proof by contradiction, suppose $x\in C_{\al,\ga}\cap C_{\al,\ga'}$. Therefore, we  have
\begin{align}
&\al\ga+\sum_{i=1}^d a_i(\ga w_i+u_i)=
x=\al\ga'+\sum_{i=1}^d b_i (\ga' w_i+u_i) \quad \mbox{(for some scalars $a_i,b_i\in \F_q$)}\cr
&\Longleftrightarrow \al(\ga-\ga')+ \sum_{i=1}^d (a_i\ga-b_i\ga')w_i=\sum_{i=1}^d (b_i-a_i)u_i\label{eq:a1}\\
&\Longrightarrow a_i-b_i=0 \mbox{ for $1\leq i\leq d$ } \quad (H \cap U=\{\o\})\cr 
&\Longrightarrow (\ga-\ga')\left(\al+ \sum_{i=1}^d b_iw_i\right)=\o \cr
&\Longrightarrow \al+ \sum_{i=1}^d b_iw_i=\o \quad (\ga-\ga'\not=\o)\cr
&\Longrightarrow  b_i=a_i=0 \mbox{ for $1\leq i\leq d$ } \quad \mbox{(since $\al\not\in W$ and $\{w_1,\ldots ,w_d\}$ is a basis of  $W$)}\label{eq:a2}\\
&\Longrightarrow  \al\ga=\al\ga' \cr
&\Longrightarrow  \ga=\ga' \quad \mbox{(by~\eqref{eq:a1},~\eqref{eq:a2},  and since $\al\not=\o$)},\notag
\end{align}
which is a contradiction as $\ga\not=\ga'$. 
\end{proof}

\bs Next, we adapt Construction~1 to derive a construction for a completely tight irreducible \avsp.

\bs\n {\bf Construction~2:} 
\begin{enumerate}
\item For $H,U$ as in Construction~1, we further let $\be$ be a primitive element for $H$ over $\F_q$ and choose $W=\la \be^i:\; 1\leq i\leq d \ra$. Since $n-d>d$, we have $\be^0=1_H\not\in W$. 
\item Again let $V=H\oplus U\cong \F_q^{n}$ and, for each $\ga\in H$, define 
\[S_\ga:=\la \ga \be+u_1,\ldots,\ga \be^{d}+u_d\ra,\]
where $\ga \be^i$ denotes field multiplication in $\F_{q^{n-d}}$.
\item Let $\al=1=1_H\in H\setminus W$ and  denote  $C_{\al,\ga}$ by $C_{\ga}=1\cdot \ga+S_\ga=\ga+S_\ga$. Thus, our partition is
\[\P=\{C_{\ga}:\; \ga\in H\}.\]
\end{enumerate}
\begin{theorem}\label{thm:2}
Let $\P$ be the affine partition given in Construction~2. Then $\P$ is a completely tight irreducible affine $d$-spread of AG$(n,q)$.
\end{theorem}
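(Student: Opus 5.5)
The fact that $\P$ is a completely tight affine $d$-spread will come straight from Lemma~\ref{lem:1}. Construction~2 is the special case of Construction~1 with $W=\la \be,\ldots,\be^d\ra$ and $\al=1$, and $1\notin W$ because $1,\be,\ldots,\be^d$ are linearly independent over $\F_q$ (since $d<n-d=[H:\F_q]$). So all the work is in proving irreducibility.

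Suppose, for a contradiction, that $\P'\subset\P$ satisfies $1<|\P'|<|\P|=q^{n-d}$ and that its union is an affine subspace $A=a+T$. Let $\Gamma=\{\ga: C_\ga\in\P'\}$. Counting points gives $|A|=|\Gamma|q^d$, so $\dim T=d+m$ with $1\le m<n-d$. Since $A$ contains each coset $C_\ga$ with $\ga\in\Gamma$, the direction space $T$ contains $S_\ga$. Let $\pi$ be the projection $V=H\oplus U\to U$. Then $\pi$ maps each $S_\ga$ isomorphically onto $U$, so $\pi(T)=U$. Set $K:=T\cap H=\ker(\pi|_T)$; then $\dim K=m$.

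Next I identify $\Gamma$ with a coset of $K$. Fix $\ga_0\in\Gamma$, so $A=\ga_0+T$.
\begin{itemize}
\item If $\ga\in\Gamma$, then the point $\ga\in C_\ga\subseteq A$, so $\ga-\ga_0\in T\cap H=K$.
\item Conversely, if $\ga\in\ga_0+K$, then the point $\ga$ lies in $A$. Because $A$ is the disjoint union of the members of $\P'$ and $\ga\in C_\ga$, the coset $C_\ga$ must belong to $\P'$, so $\ga\in\Gamma$.
\end{itemize}
Hence $\Gamma=\ga_0+K$, which agrees with the count $|\Gamma|=q^m$. In particular, every $\de\in K$ has the form $\ga-\ga_0$ with $\ga\in\Gamma$.

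The key step uses the fact that both $S_\ga$ and $S_{\ga_0}$ lie in $T$. Subtracting their $i$-th generators gives
\[(\ga\be^i+u_i)-(\ga_0\be^i+u_i)=\de\be^i\in T\cap H=K,\qquad 1\le i\le d.\]
For $i=1$ this says that $K$ is closed under multiplication by $\be$. Since $\be$ is primitive, $\F_q[\be]=H$, so $K$ is closed under multiplication by all of $H$. A subspace of $H$ with this property is an $H$-subspace of the one-dimensional $H$-space $H$, so $K=\{\o\}$ or $K=H$. That is, $m=0$ or $m=n-d$, contradicting $1\le m<n-d$. Therefore $\P$ is irreducible.

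The step I expect to need the most care is the identification $\Gamma=\ga_0+K$. It needs two things: (a) the decomposition of $T$ via $\pi$, which uses that every $S_\ga$ is a graph over $U$; and (b) the converse inclusion, which uses the partition property to show that every $\ga\in\ga_0+K$ actually indexes a member of $\P'$. Once that is in place, closure under multiplication by $\be$ finishes the argument. Only the generator $\ga\be+u_1$ is needed for this, so the choice of $W$ matters mainly for the tightness part.
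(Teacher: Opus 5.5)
Your proof is correct, and for irreducibility it takes a different and somewhat cleaner route than the paper.

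\textbf{The paper's route.} It first reduces, by translation, to the case where the union $V'$ of $\P'$ is a linear subspace, so that $C_0=U\in\P'$. It then fixes a second member $C_\ga$ and proves by induction on $k$ that $\ga\be^k\in V'$. Each step uses that the point $\ga+\ga\be^k\in H\cap V'$ forces $C_{\ga+\ga\be^k}\in\P'$, and then reads off $\ga\be^{k+1}$ from the generator $(\ga+\ga\be^k)\be+u_1$. Finally, $\be$ generating $H^*$ gives $H\subseteq V'$.

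\textbf{Your route.} You work with an arbitrary affine union $\ga_0+T$. The projection onto $U$ gives $\dim(T\cap H)=m$ with $|\Gamma|=q^m$. The partition property identifies $\Gamma=\ga_0+K$. You then observe in one step that $K$ is closed under multiplication by $\be$, so $K$ is an $H$-subspace of $H$ because $\F_q[\be]=H$.

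\textbf{Comparison.} The core mechanism is the same in both: differences of the $u_1$-generators of two members of $\P'$ produce $\de\be\in H$. Your version gains three things.
\begin{enumerate}
\item It avoids the ``WLOG $V'$ linear'' reduction. The paper justifies this only by a footnote about translations, but a translate of $\P$ is no longer of the form in Construction~2. Making it rigorous needs an affine automorphism of $\P$, such as $x\mapsto L(x)-\ga_0$ with $L|_H=\mathrm{id}$ and $L(u_i)=u_i-\ga_0\be^i$, which sends $C_\ga$ to $C_{\ga-\ga_0}$.
\item It needs only that $\be$ generates $H$ as a field over $\F_q$, not that it generates the multiplicative group $H^*$.
\item It gives the exact shape $\Gamma=\ga_0+K$ of any would-be reducing subfamily.
\end{enumerate}
In exchange, the paper's argument is more hands-on and needs no dimension count or projection.
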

\begin{proof}{Proof of Theorem \ref{thm:1}}
Since Construction~2 retains all the relevant properties of Construction~1, the fact that $\P=\{C_{\ga}:\; \ga\in H\}$ is a completely tight \avsp~follows from Lemma~\ref{lem:1}.

Next, we prove the irreducibility of $\P$. Assume that there exists some sub-partition $\P'\subset \P$ such that $1<|\P'|<|\P|$ and the union of all subspaces in $\P'$ is a subspace $V'$ of AG$(n,q)$. 
By translating\footnote{(Complete) tightness and irreducibility of $\P_\al$'s are preserved by vector translations.} $\P$ if necessary, we can assume that $V'$ is linear. Thus, if $\P$ is reducible, then we have a proper subspace $V'$ of $V$ such that
\begin{equation}\label{eq:span}
V'=\la \bigcup_{C\in \P'} C\ra=\bigcup_{C\in \P'} C.
\end{equation}
Since $\P'$ contains at least two cosets and $V'$ is a linear subspace,  one of the minimum of two cosets must be $C_{0}=S_0=U$ and the other one $C_{\ga}$, for some $\ga\not=0$.
Then it follows from~\eqref{eq:span} that
\[U=\la u_1, \ldots, u_d\ra \subseteq V'\mbox{ and } \la C_{\ga}\ra = \la \ga, \ga\be+u_1,\ldots,\ga\be^d+u_d\ra\subseteq V',\]
and hence,
 \[\la \ga,\ga\be,\ldots,\ga\be^d, u_1, \ldots, u_d\ra \subseteq V'.\]
 We now show by induction that $\ga \be^k\in V'$ for all integers $k\geq1$.
Given that $\ga \be\in V'$, the base case holds. Next, assume that $\ga \be^k\in V'$ for any $k\geq1$. Since $\ga\in V'$ and $V'$ is a linear subspace, we must have $\ga+\ga \be^k\in V'$. Thus, 
it follows from~\eqref{eq:span} that there exists $\de\in H$ such that  $C_\de\in\P'$ and $\ga+\ga \be^k \in C_\de$. Consequently, we have
\begin{align*}\label{eq:cond1}
\ga+\ga \be^k \in C_\de 
& \Longleftrightarrow 
\ga+\ga \be^k = \de+\sum_{i=1}^d a_i (\de\be^i+u_i) \quad \mbox{(for some scalars $a_i\in \F_q$)}\cr 
& \Longleftrightarrow 
-\de+\ga+\ga \be^k -\sum_{i=1}^d a_i \de\be^i=\sum_{i=1}^d a_i u_i  \cr 
& \Longrightarrow 
 \mbox{ $a_i=0$ for $1\leq i\leq d$ } \quad (H\cap U=\{ \o\}) 
 \cr
 & \Longrightarrow 
\de=\ga+\ga \be^k.
\end{align*}

 \[C_\de= C_{\ga+\ga \be^k}=\ga+\ga \be^k+ \la (\ga+\ga \be^k)\be+u_1,\ldots,(\ga+\ga \be^k)\be^d+u_d\ra \subseteq V'.\]
In particular,  it must be true that
\begin{align}\label{eq:cond2}
\ga+\ga \be^k+\left((\ga+\ga \be^k)\be+u_1\right) =
\ga+\ga\be+\ga \be^k+\ga \be^{k+1}+u_1 \mbox{ is in } V'\Longrightarrow \ga \be^{k+1}\in V' .
\end{align}
This shows the inductive step and proves that $\ga \be^k\in V' $ for any integer $k\geq 1$.
Since $\ga\not=0$ and $H=\F_q(\beta) \cong \F_{q^{n-d}}$, it follows that 
\[\{\ga \be^k:\; k\geq 1\}=\ga\F_{q^{n-d}}^*=\F_{q^{n-d}}^*\subseteq V',\]
where $\F_{q^{n-d}}^*=\F_{q^{n-d}}\setminus\{\o\}$. 
Moreover $U\subseteq V'$, and thus, $V=H\oplus U\subseteq V'$, which implies that $V'=V$. We have proven the irreducibility of $\P=\{C_\ga:\; \ga\in H\}$ and the proof is complete.
\end{proof} 

 
\end{document}